\newcommand{\labbel}{\label}
\newcommand{\arxiv}{}
\newcommand{\sharr}{\text{\tiny{$\rightarrow$}}}
\newtheorem{theorem}{Theorem}[section]
\newtheorem{lemma}[theorem]{Lemma}
\newtheorem*{claim*}{Claim}
\newtheorem*{theorem*}{Theorem}
\newtheorem*{proposition*}{Proposition}
\newtheorem*{corollary*}{Corollary}
\newtheorem*{lemma*}{Lemma}
\newtheorem*{scholion*}{Scholion}
\theoremstyle{definition}
\newtheorem{definition}[theorem]{Definition}
\theoremstyle{remark}
\newtheorem{remark}[theorem]{Remark}
\newtheorem*{remark*}{Remark}
\newtheorem*{remarks*}{Remarks}
\newtheorem*{observation*}{Observation}
\numberwithin{equation}{section}
\begin{document}

\title
{Universal extensions of specialization semilattices}

\author{Paolo Lipparini} 

\email{lipparin@axp.mat.uniroma2.it}

\urladdr{http://www.mat.uniroma2.it/~lipparin}

\address{Dipartimento di Matematica\\Viale della  Ricerca
 Scientifica Non Chiusa \\Universit\`a di Roma ``Tor Vergata'' 
\\I-00133 ROME ITALY}

\subjclass{Primary  06A15; Secondary 54A05;  06A12}

\keywords{specialization semilattice; closure semilattice; closure space; 
universal extension}

\thanks{Work performed under the auspices of G.N.S.A.G.A. 
Work partially supported by PRIN 2012 ``Logica, Modelli e Insiemi''.
The author acknowledges the MIUR Department Project awarded to the
Department of Mathematics, University of Rome Tor Vergata, CUP
E83C18000100006.}

\begin{abstract}
A specialization semilattice is a join 
semilattice together with a coarser  preorder 
$ \sqsubseteq $  satisfying an appropriate compatibility condition.
If $X$ is a topological space,
then $(\mathcal P(X), \cup, \sqsubseteq )$
is a specialization semilattice, where
$ x \sqsubseteq y$ if $x \subseteq Ky$,
   for $x,y \subseteq  X$, and $K$ is closure.
Specialization semilattices and posets
appear as auxiliary structures in many 
disparate scientific fields, even unrelated to topology.

\arxiv{For short, the notion is useful since it allows us to 
consider a relation
of ``being generated by'' 
with no need to require the existence of an actual  ``closure'' 
or  `` hull'', which is problematic
in certain contexts.} 

In a former work we showed
that every specialization semilattice can be embedded
into the specialization semilattice associated
to a topological space as above. 
Here we describe the universal embedding
of a specialization semilattice into an
additive 
 closure semilattice.
We notice that a categorical argument guarantees
the existence of  universal embeddings in  many parallel situations.
\end{abstract}

\maketitle

\arxiv{
\section{Specialization without actual closure} \labbel{introa} 
 } 

The idea of \emph{closure} 
is pervasive in mathematics.
First, the notion is used in the sense of 
\emph{hull}, \emph{generated by},
for example when we consider the subgroup  
generated by a given subset of some group.
In a slightly different but related sense,
closure is a fundamental notion in topology.
In both cases, ``closed'' sets are preserved under
arbitrary intersections; in the topological case
the union of two closed sets is still closed; in most
``algebraic'' examples,  the union of an upward directed family
of closed subsets is still closed.

The general notion of a \emph{closure space}
which can be abstracted from the above examples
has been dealt with or foreshadowed by 
such mathematicians as 
  Schr\"oder, Dedekind, Cantor, Riesz, Hausdorff, Moore,
\v Cech,  Kuratowski, Sierpi\'nski, Tarski, Birkhoff and
Ore, as listed in Ern\'e \cite{E}, with applications, 
among others, to ordered sets, lattice theory,
logic, algebra,  topology, computer science
and connections with category theory.
\arxiv{
See  the mentioned work \cite{E}   for more details and references.

Considering ``full'' closure sometimes generates
objects that are  ``too large''.
For example, if we are working with sets of groups
and we consider, as closure,
the operation of taking arbitrary products of
members of 
the set under consideration, then the resulting operation
takes a set to a proper class, an object which 
causes foundational issues.

As a smaller and more concrete example, suppose that 
we are given a finite presentation 
by generators and relations of some group $\mathbf G$,
and $\mathbf H$ is the subgroup generated by a finite set $F \subseteq G$. 
It might turn out that $G$ and $H$ are actually infinite, hence we cannot 
store the list of
\emph{all} the elements of $H$, say, in the hard disk of a computer.
However, we can store the information that some finite set
$E$ is contained in $H$.
Thus the set $H= \langle F \rangle $
is too large to be actually listed, while the information 
that everything in $E$ can be generated by $F$  turns out to be more tractable.
In other words, we do not need to consider 
$\langle F \rangle$ as ``realized''
if we are only interested in the binary relation
$ E \sqsubseteq F$ given by   $E \subseteq \langle F \rangle$,
for $E, F$ finite subsets of $G$.

In many cases we are in a similar situation:
it is not necessary to describe the actual closure,
we just need to know whether some object is contained or 
not  in  the closure.
Turning to an example above, we generally do not need to consider
 the class of all groups which can be 
expressed as products of a given set of groups
(as we mentioned, a problematic object, anyway).
We usually simply need to know that some specific group
can be expressed in such a way.
} 

Even in topology, one frequently needs to consider
only the \emph{adherence} relation $p \in K y$,
meaning that the element $p$ belongs to the   
topological closure of the subset $y$\arxiv{, with no need to deal with the full closure $Ky$}. 
Arguing in terms of adherence provides a conceivably
more intuitive approach to continuity:
as well-known, a function $f$ between topological spaces is continuous
if and only if $f$ preserves the adherence relation, namely,
if and only if  $p \in K y$ implies 
$f(p) \in K f(y)$.

Similarly, we can consider the \emph{specialization} relation
$ x \sqsubseteq y$ defined by 
$x \subseteq Ky$, for $x,y$ subsets of some topological space $X$.
It is a natural 
generalization of the \emph{specialization preorder}
defined on points
of a topological space  \cite[Ex. 3.17e]{Ha}, \cite{GHKLMS}.
As in the case of adherence,
 a function $f$  from $X$ to some other space $Y$ is continuous
if and only if the direct image function $f^\sharr$  is a homomorphism
from the structure $(\mathcal P(X), \cup, \sqsubseteq )$
to  $(\mathcal P(Y), \cup, \sqsubseteq )$.

The above ``algebraization'' of topology is thus significantly
different from the classical  approach presented in \cite{MT},
where the operation $K$ of closure is taken into account.
The notion of homomorphism in \cite{MT} does not correspond to the notion of
continuity. In fact,  a function $f$ between two spaces 
is continuous if and only if  $f^\sharr(Kx) \subseteq Kf^\sharr(x)$,
for all subsets $x$. On the other hand, a homomorphism $\varphi$  of closure algebras 
\cite{MT} is assumed to satisfy the stronger condition
$\varphi (Kx)=K\varphi(x)$.

In \cite{mtt} we characterized \emph{specialization semilattices},
those structures which can be embedded into   
$(\mathcal P(X), \cup, \sqsubseteq )$ for some topological space $X$,  
and \emph{specialization posets}, which can be embedded into   
$(\mathcal P(X), \subseteq , \sqsubseteq )$.
See \eqref{s1}\,-\,\eqref{s3} below. 
While our main interest was algebraic and model-theoretical,
we realized that such structures appear in many distinct and unrelated settings.

A typical example of a specialization to which no closure
can be associated is \emph{inclusion modulo finite}.
If $X$ is an infinite set and we let $ x \sqsubseteq y$
if $x \setminus y$ is finite, for $x,y \subseteq X$,
then $(\mathcal P(X), \cup, \sqsubseteq )$
is a specialization semilattice. Inclusion modulo finite
plays important roles, among other,
 in  set theory, topology and  model theory \cite{Bl,MN}.
From a slightly different  perspective,
working modulo finite corresponds to taking
the quotient modulo the ideal of finite sets on the
standard Boolean algebra on $\mathcal P(X)$.
From the present point of view, a similar construction
can be used to generate specialization semilattices: 
if 
$\varphi: \mathbf S \to \mathbf T $  is a semilattice homomorphism
 and we set $ a \sqsubseteq  b$ in $S$ 
when $\varphi(a) \leq \varphi (b)$
in $\mathbf T$, then 
$\mathbf S$ 
is endowed with the structure of a specialization 
semilattice. As  we shall show elsewhere,
 every specialization semilattice  
can indeed be constructed this way. 
Specialization semilattices are
substructures of topological spaces 
in the language with union and  $ \sqsubseteq $, 
but, in a sense, they are also semilattices 
together with  a quotient (or a congruence). 
 
Under different terminology, specialization appears in \cite{GT}  
in the context of complete lattices,
with deep and important applications 
to algebraic logic.
See  Conditions (1) - (2)
in \cite[Subsection 3.1]{GT}. 
Specialization semilattices arise also naturally in the theory of
\emph{tolerance spaces} \cite{PN}, with applications
 to image
analysis  and  information systems
\cite{PW}.   

\emph{Causal spaces}
have been introduced  by 
Kronheimer and  Penrose  \cite{KP}
 in connection with 
abstract foundations of 
general relativity.
Causal spaces
can be axiomatized as
two partial orders, one finer than the other,
and satisfying a further coherence condition. In particular,
they are specialization posets.
As another example, if $\mu $ is a measure on some set $S$ of subsets
of $X$, then
$a \sqsubseteq_ \mu b$
defined by  $\mu (a) \leq \mu (b)$, for $a,b \in S$,
is a preorder which forms a specialization
poset together with inclusion.
 If $\mu $ is 
$2$-valued, then we get a specialization semilattice. 
Such structures have been widely studied
in connection with foundations of probability.
See \cite{L} and references there.  

A \emph{closure poset}  (\emph{semilattice}) is a 
partially ordered set (join semilattice) together 
with an isotone, extensive and
idempotent  operator $K$.
See Remark \ref{psc}.
If $K$ satisfies
$K(a \vee b) = Ka \vee Kb$
and $K0=0$ 
in a closure semilattice with minimum $0$, then  
 $K$ satisfies the Kuratowski axioms
for topological closure.
Closure posets and semilattices
have many applications; see \cite{E,R} 
for references.
As in the case of topological spaces,
setting $ a \sqsubseteq b$ if $a \leq Kb$
induces the structure of a specialization poset (semilattice)
and a large part of the theory of closure posets
applies to this more general setting. 
See  \cite{mtt} for more details and further examples. 

Henceforth we were convinced that the notion of a specialization semilattice
deserves an accurate study, both for its possible foundational relevance
in connection with topology, and since the notion appears
in many disparate fields.
\arxiv{The main reason for the latter fact is possibly 
the need or the opportunity, as singled out at the beginning
of this introduction, 
 of asserting that some object 
belongs to the hull generated by another object
without having to  deal with full ``closure''. } 

The main result in \cite{mtt} asserts that every 
specialization semilattice or poset can be embedded into a
``topological'' one.
The extensions constructed in \cite[Section 4]{mtt}
are not minimal and  possibly neither canonical nor 
 functorial. In  search for a better-behaved
extension, here we 
explicitly describe the universal embedding
of a specialization semilattice into an
additive closure semilattice.
This is done in Section \ref{univ1}. 
In Section \ref{univ} we  then show that the existence of such an embedding,
as well as the existence of a multitude of other embeddings,
follow from an abstract argument.

\section{Preliminaries} \labbel{prel}

A \emph{specialization semilattice} \cite[Definition 3.1]{mtt} 
is a join semilattice endowed with
a further preorder $ \sqsubseteq $ 
which is coarser than the order $\leq$ induced by 
$\vee$ and satisfies the further compatibility relation
\eqref{s3} below.  
In detail, a specialization semilattice $\mathbf S$ 
is a triple 
$(S, \vee, \sqsubseteq ) $
such that 
$(S, \vee)$
is a semilattice
and moreover
\begin{align}
\labbel{s1}    \tag{S1}
 & a \leq b  \Rightarrow  a \sqsubseteq b, 
\\
\labbel{s2}    \tag{S2}
& a \sqsubseteq b \ \&\   b \sqsubseteq c \Rightarrow 
 a \sqsubseteq c, 
\\
\labbel{s3}    \tag{S3}
 &a \sqsubseteq b  \ \&\  a_1 \sqsubseteq b
 \Rightarrow 
 a \vee a_1 \sqsubseteq b, 
\end{align}
 for all elements $a,b,   c, a_1  \in S$.
Notice that from \eqref{s1} one  gets 
\begin{align} 
\labbel{s4}    \tag{S4} 
 & a  \sqsubseteq a,
 \end{align}  
for every $a $ in $ S$. 

It can be shown \cite[Remark 3.4(a)]{mtt} that every specialization semilattice 
satisfies
\begin{equation}  
\labbel{s7}    \tag{S7}
 a \sqsubseteq b  \ \&\  a_1 \sqsubseteq b_1
  \Rightarrow 
 a \vee a_1 \sqsubseteq b \vee b_1 .
 \end{equation} 

A \emph{specialization poset}  
is a partially ordered set with a further preorder satisfying
\eqref{s1} - \eqref{s2}. Specialization posets occur naturally
in many situations, but the theory of specialization semilattices
is much cleaner and here we shall be mainly interested in the latter.  

A \emph{homomorphism} of specialization semilattices
is a semilattice homomorphism $\eta$
such that $ a \sqsubseteq b$ implies $\eta(a) \sqsubseteq  \eta(b)$.
An \emph{embedding} is an injective homomorphism
satisfying the additional condition that
$\eta(a) \sqsubseteq  \eta(b)$ implies $ a \sqsubseteq b$.

If $\mathbf S$ is a specialization semilattice,
$a \in S$ and the set
$S_a=\{ b \in S \mid  b \sqsubseteq a \}$
has a $\leq$-maximum, such a maximum shall be denoted
by $Ka$ and shall be called the \emph{closure} of $a$.   
Notice that we require $Ka$ to be the maximum
of $S_a$, not just a supremum. Namely, we 
require $Ka \sqsubseteq a$. 

In general, $Ka$ need not exist
in an arbitrary specialization semilattice:
consider the example of inclusion modulo finite
mentioned in  the introduction.
If $Ka$ exists for every  
$a \in S$, then $\mathbf S$ shall be called a
\emph{principal} specialization semilattice.

\begin{remark} \labbel{psc}    
(a) Principal specialization semilattices
are in a one-to one correspondence with
\emph{closure semilattices}, that is, semilattices with a further
operation $K$ such that 
$a \leq Ka$, $KKa=Ka$,
  and $K(a \vee b) \geq Ka \vee Kb$.

If $\mathbf  C$ is a closure semilattice, then
setting $ a \sqsubseteq b$
if $a \leq Kb$ makes $\mathbf  C$ 
a specialization semilattice,
and $K$  turns out to 
be closure also
   in the  sense of specialization semilattices.
See \cite[Section 3.1]{E}, in particular,
\cite[Proposition 3.9]{E} for details.     
 
(b) The clause $K(a \vee b) \geq Ka \vee Kb$
is  equivalent to the condition that
$c \geq a$ implies $Kc \geq Ka$.  
As a consequence, we get
$K(a \vee b) \leq  K(a \vee Kb) $
in closure semilattices.
Moreover,  
$K(a \vee b) \geq Ka \geq  a  $,
$K(a \vee b) \geq   Kb $, so
$K(a \vee b) \geq  a \vee Kb $,
hence
$K(a \vee b)=KK(a \vee b) \geq  K(a \vee Kb )$.
In conclusion, as well-known,
$K(a \vee b) = K(a \vee Kb) $
in every closure semilattice.

By the same argument, we could even
prove $K(a \vee b) = K(Ka \vee Kb) $,
but we shall not need this in what follows.
 
(c) If $a$ and $b$ are elements of  some specialization semilattice 
and both $Ka$ and  $Kb$ exist, then
$Ka \leq Kb$ if and only if $ a \sqsubseteq b$.
Indeed, from $a \leq a$ and \eqref{s1} we get
$ a \sqsubseteq a$, thus $a \leq Ka$, by the definition
of $Ka$.
Hence if   $Ka \leq Kb$, then $a \leq Kb$,
which means $a \sqsubseteq b$, by the definition 
of $Kb$.
Conversely, if $ a \sqsubseteq b$,
then from $ Ka \sqsubseteq a$ and 
\eqref{s2} we get   $ Ka \sqsubseteq b$,
which means $Ka \leq Kb$.

In particular, in a principal specialization semilattice,
$Ka=Kb$ if and only if both $a \sqsubseteq b$ and  
$b \sqsubseteq a$. 
\end{remark}

If $\mathbf S$ and $\mathbf T$ are principal specialization
semilattices, a \emph{$K$-{\hspace{0 pt}}homomorphism} from
$\mathbf S$ to $\mathbf T$
is a homomorphism  $\eta$
which
preserves $K$, that is  $\eta (Ka)= K\eta(a)$.
Thus $K$-homomorphisms correspond to the natural
 notion of 
homomorphism for closure semilattices.

If $\eta$ is a semilattice homomorphism
between two principal specialization semilattices and $\eta$ 
satisfies $\eta (Ka)= K\eta(a)$, then $\eta$ 
is also a $ \sqsubseteq $-{\hspace{0 pt}}homomorphism. 
Indeed,
$ a \sqsubseteq b$ is equivalent to $a \leq Kb$,
hence $ \eta (a) \leq \eta (Kb)= K\eta(b) $, which implies
$ \eta (a) \sqsubseteq \eta(b) $.
 
Notice that, even when $\mathbf S$ and $\mathbf T$
are principal, a specialization homomorphism
need not be a $K$-homomorphism;
see, for example, the second sentence
 in the following Remark \ref{top}. 
Of course, if either $\mathbf S$ or $\mathbf T$
fails to be principal, then it is not even possible to apply
the notion of $K$-homomorphism.
Whenever we speak of a homomorphism without
further specifications, we always mean a homomorphism
of specialization semilattices as introduced above, that is,
we do not assume that homomorphisms are $K$-homomorphisms,
unless specified otherwise.

A principal specialization semilattice
(or a closure semilattice) is \emph{additive}
if $K(a \vee b)= Ka \vee Kb$.   

\begin{remark} \labbel{top}
If $X$ is a topological space with topological closure $K$,
then $(\mathcal P, \cup, K)$ is an additive closure semilattice,
thus  $(\mathcal P, \cup, \sqsubseteq )$ is a 
principal additive specialization semilattice,
by Remark \ref{psc}(a).

It can be checked that topological continuity corresponds to
the notion of  homomorphism
 between the associated specialization semilattices
 \cite[Proposition 2.4]{mtt}; on the other hand, the notion of
$K$-homomorphism is stronger, and corresponds to the notion of
a closed continuous map.   

All the above comments apply to \emph{closure spaces},
which are like topological spaces, except that the union of two
closed subsets is not assumed to be closed, equivalently, closure
is not assumed to satisfy $K(a \cup b) \subseteq Ka \cup Kb$.
In a closure space the closure of the empty set is not assumed
to be the empty set, either.
Closure spaces occur naturally in algebra; for example, if $\mathbf G$
is a group, then $\mathcal P(G)$ becomes a closure space if 
subgroups are considered as the closed subsets of $G$.
See \cite{E} for more examples and details.
Of course,  in the case of a closure space, the associated
specialization semilattice as above is still principal, 
 but not necessarily additive.
\end{remark}   

Further details about  the above  notions can be found in \cite{mtt}.

A \emph{specialization semilattice with $0$} 
is a specialization semilattice with a constant $0$
which is a neutral element with respect  to the semilattice operation,
thus a minimal element in the induced order,
and furthermore satisfies  
\begin{equation}  
\labbel{s0}    \tag{S0}
 a \sqsubseteq 0 
  \Rightarrow 
 a =0. 
 \end{equation} 
A homomorphism $\eta$ of specialization semilattices with
$0$ is required to satisfy $\eta(0)=0$. 
When some risk of ambiguity might occur,
we shall explicitly mention that the homomorphism
is \emph{$0$-preserving}.   

\begin{remark} \labbel{0}
   We shall generally assume that specialization semilattices
have a $0$, but this assumption is only for simplicity.
In fact, if $\mathbf S$  is an arbitrary specialization semilattice,
then by adding a new $\vee$-neutral element $0$
and setting $0 \sqsubseteq a $, for every     
 $a \in S \cup \{ 0 \} $,
and $ a \not\sqsubseteq 0 $, for every   
$a \in S$, we get a specialization semilattice with $0$.  
Conversely, if $\mathbf S$ is a specialization semilattice 
with $0$, then $S \setminus \{ 0 \} $
has naturally the structure of a specialization semilattice.  
 \end{remark}

\section{Universal extensions} \labbel{univ1}

Given any specialization semilattice $\mathbf S$,
we now construct a ``universal'' principal 
additive extension $ \widetilde {  \mathbf S}$  of $\mathbf S$.

\begin{definition} \labbel{und}   
Suppose that $\mathbf S$ is a specialization semilattice.

On the product 
$S \times S$ define an equivalence relation 
$ \sim$ by
  \begin{enumerate}   
 \item[(*)]
$ (a,b ) \sim (c,d)$ 
if and only if,   in $\mathbf S$,
$b \sqsubseteq d$, $d \sqsubseteq b$ 
and there are  $a_1, c_1 \in S$  
such that  $ a_1 \sqsubseteq b$,
$ c_1 \sqsubseteq d$
and
$ a  \leq c \vee c_1$,
 $ c \leq  a \vee a_1$.
   \end{enumerate} 
We shall check in Lemma \ref{corre}(i)
below that $ \sim$ is actually an equivalence relation.  
Let 
$ \widetilde {   S} = 
( S \times  S) /{\sim}$. 

Define $K: \widetilde {   S} \to \widetilde {   S}$
by $K[a,b] = [a, a \vee b]$,   
where $[x,y]$
is the $ \sim$ class of the pair $(x,y)$.    
In Lemma \ref{corre}(ii)(iii) we shall  prove that $K$ is well-defined
and that  $\widetilde {   S}$ naturally inherits
a semilattice operation $  \vee$ from 
the semilattice product
$\mathbf S \times\mathbf S$.

Define $ \sqsubseteq $ on   $\widetilde {   S}$ by
$ [a,b] \sqsubseteq [c,d]$
if  $ [a,b] \leq K[c,d]$, where $\leq$ 
is the order induced by $   \vee$
and let $\widetilde {   \mathbf S} = (\widetilde { S},   \vee, \sqsubseteq ) $,
$\widetilde {   \mathbf S}' = (\widetilde { S},   \vee, K) $.

If $\mathbf S$ is a specialization semilattice with $0$, 
 define
$\upsilon_{_\mathbf S} :  S \to \widetilde { S}$ 
by 
$\upsilon_{_\mathbf S} (a)= [a,0]$.
\end{definition}

We intuitively think
of  $[a,b]$ as $a \vee Kb$, where $Kb$ 
is the ``new'' closure we need to introduce;
in particular,  $[a,0]$ corresponds to $a$
and $[0,b]$ corresponds to a  new element $Kb$.

\begin{theorem} \labbel{univt}
Suppose that  $\mathbf S$ is a specialization semilattice
with $0$. Let $ \widetilde {  \mathbf S}$ and
$\upsilon_{_\mathbf S} $ be  as in Definition \ref{und}.
Then the following statements hold. 
  \begin{enumerate}   
 \item 
$ \widetilde {  \mathbf S}$ is 
a principal 
additive specialization semilattice with $0$. 
\item 
$\upsilon_{_\mathbf S} $ is a 
$0$-preserving 
 embedding of $\mathbf S$ 
 into $\widetilde {  \mathbf S}$.
\item  
The pair $ (\widetilde {  \mathbf S}, \upsilon_{_\mathbf S})$
has the following universal property.

For every  principal 
additive specialization semilattice
$\mathbf  T$  
 and every homomorphism
 $ \eta : \mathbf S \to \mathbf  T$, there
is a unique  $K$-{\hspace{0 pt}}homomorphism  
 $ \widetilde{\eta} : \widetilde{\mathbf S} \to \mathbf  T$
such that
$\eta = \upsilon_{_\mathbf S} \circ \widetilde{\eta}$. 
\begin{equation*}
\xymatrix{
	{\mathbf S}  \ar[rd]_{\eta}
 \ar[r]^{\upsilon_{_\mathbf S}}
 &\widetilde{\mathbf  S} \ar[d]^{\widetilde{\eta}}\\
	 &{\mathbf  T}}
   \end{equation*}    
\item
If $\mathbf U$
is another specialization semilattice with $0$  
and $\psi : \mathbf S \to \mathbf U$ 
is a 
$0$-preserving 
homomorphism, then 
there is a unique 
 $K$-homomorphism
$\widetilde{\psi}: \widetilde{\mathbf S} \to \widetilde{\mathbf U}$
making the following diagram commute.
\begin{equation*}
\xymatrix{
	{\mathbf S} \ar[d]_{\psi} \ar[r]^{\upsilon_{_\mathbf S}}
 &\widetilde{\mathbf S} \ar[d]^{\widetilde{\psi}}\\
	{\mathbf U} \ar[r]^{\upsilon_{_\mathbf U}}
 &{\widetilde{\mathbf U}}}
   \end{equation*}    
 \end{enumerate} 
 \end{theorem}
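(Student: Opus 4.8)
The guiding idea is to read a class $[a,b]$ as the formal element $a\vee Kb$, so that $\upsilon_{\mathbf{S}}(a)=[a,0]$ stands for $a$ and $[0,b]$ for the newly adjoined closure $Kb$. Two bookkeeping identities will drive everything. First, $[a,b]=[a,0]\vee[0,b]=\upsilon_{\mathbf{S}}(a)\vee[0,b]$. Second, and crucially, $[0,b]=[b,b]=K\upsilon_{\mathbf{S}}(b)$, which one verifies from $(\ast)$ by choosing the witnesses $a_1=b$, $c_1=0$. Hence every class is generated from the image of $\upsilon_{\mathbf{S}}$ by $\vee$ and $K$, via $[a,b]=\upsilon_{\mathbf{S}}(a)\vee K\upsilon_{\mathbf{S}}(b)$; this is what makes the universal property hold with a \emph{unique} extension.

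For part (1), I assume Lemma \ref{corre}, so that $\vee$ and $K$ are well defined on $\widetilde{S}$ and $(\widetilde{S},\vee)$ is a semilattice, and I check on representatives that $K$ is a closure operator: extensivity $[a,b]\le[a,a\vee b]$ reduces to $b\vee(a\vee b)=a\vee b$; idempotency to $a\vee(a\vee b)=a\vee b$; and additivity holds because both $K([a,b]\vee[c,d])$ and $K[a,b]\vee K[c,d]$ equal $[a\vee c,\,a\vee b\vee c\vee d]$ after rearranging joins. Remark \ref{psc}(a) then makes $\widetilde{\mathbf{S}}$ a principal additive specialization semilattice whose closure is $K$; since $K[0,0]=[0,0]$ and $[0,0]$ is the $\vee$-neutral minimum, $[a,b]\sqsubseteq[0,0]$ forces $[a,b]\le[0,0]$, hence $[a,b]=[0,0]$, giving \eqref{s0}. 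For part (2), preservation of $\vee$ and of $0$ is immediate from $[a,0]\vee[b,0]=[a\vee b,0]$ and $\upsilon_{\mathbf{S}}(0)=[0,0]$. To see $\upsilon_{\mathbf{S}}$ preserves and reflects $\sqsubseteq$, unwind $\upsilon_{\mathbf{S}}(a)\sqsubseteq\upsilon_{\mathbf{S}}(b)$ as $[a,0]\le[b,b]$, i.e. $(a\vee b,b)\sim(b,b)$: the forward direction uses witnesses $a_1=b$, $c_1=a$, while the reverse extracts from $a\vee b\le b\vee c_1$ with $c_1\sqsubseteq b$ the chain $a\le b\vee c_1\sqsubseteq b$ through \eqref{s1}, \eqref{s3}, \eqref{s2}. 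Injectivity is exactly where \eqref{s0} is needed: $[a,0]=[b,0]$ supplies witnesses $a_1,c_1\sqsubseteq0$, so $a_1=c_1=0$ and thus $a\le b\le a$.

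Part (3) is the crux and the place I expect the real work. Uniqueness is free from the generation identity above: any $K$-homomorphism $\widetilde{\eta}$ with $\widetilde{\eta}\circ\upsilon_{\mathbf{S}}=\eta$ must satisfy $\widetilde{\eta}[a,b]=\eta(a)\vee K\eta(b)$. I then \emph{define} $\widetilde{\eta}[a,b]=\eta(a)\vee K\eta(b)$ and verify it is legitimate. The main obstacle is well-definedness: assuming $(a,b)\sim(c,d)$, I must prove $\eta(a)\vee K\eta(b)=\eta(c)\vee K\eta(d)$. The relations $b\sqsubseteq d$ and $d\sqsubseteq b$, pushed through the $\sqsubseteq$-homomorphism $\eta$ and then closed up using monotonicity and idempotency of $K$ in $\mathbf{T}$, give $K\eta(b)=K\eta(d)$; and $a\le c\vee c_1$ with $c_1\sqsubseteq d$ yields $\eta(a)\le\eta(c)\vee K\eta(d)$, whence $\eta(a)\vee K\eta(b)\le\eta(c)\vee K\eta(d)$, the reverse inequality following from the symmetric witnesses. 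That $\widetilde{\eta}$ is a homomorphism and a $K$-homomorphism is then routine, each computation leaning on additivity $K(x\vee y)=Kx\vee Ky$ in $\mathbf{T}$; commutativity $\widetilde{\eta}[a,0]=\eta(a)\vee K\eta(0)=\eta(a)$ uses $K0=0$ in $\mathbf{T}$, which holds for principal specialization semilattices with $0$ by \eqref{s0} (together with the standing convention that homomorphisms are $0$-preserving).

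Finally, part (4) requires no fresh computation and is the payoff of phrasing (3) as a universal property. Applying (3) to the $0$-preserving composite $\eta:=\upsilon_{\mathbf{U}}\circ\psi\colon\mathbf{S}\to\widetilde{\mathbf{U}}$ — which is admissible since $\widetilde{\mathbf{U}}$ is a principal additive specialization semilattice by part (1) — produces a unique $K$-homomorphism $\widetilde{\psi}:=\widetilde{\eta}\colon\widetilde{\mathbf{S}}\to\widetilde{\mathbf{U}}$ with $\widetilde{\psi}\circ\upsilon_{\mathbf{S}}=\upsilon_{\mathbf{U}}\circ\psi$, which is precisely the asserted commuting square, with uniqueness inherited from that in part (3).
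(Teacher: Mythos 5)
Your proposal is correct and follows essentially the same route as the paper: verify the closure-semilattice axioms on representatives and invoke Remark \ref{psc}(a) for (1), check embedding properties with explicit witnesses for (2), force $\widetilde{\eta}[a,b]=\eta(a)\vee K\eta(b)$ from the generation identity and then verify well-definedness and the homomorphism conditions for (3), and derive (4) by applying (3) to $\upsilon_{\mathbf U}\circ\psi$. The only divergences are cosmetic (e.g.\ your witnesses $a_1=b$, $c_1=a$ in part (2) instead of the paper's $c_1=a\vee b$), and your explicit remark that commutativity of the triangle needs $K\eta(0)=\eta(0)$ is a point the paper leaves implicit.
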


We first need  to 
check that Definition \ref{und}
is correct. This is the content
of the following lemma.

\begin{lemma} \labbel{corre}
Assume the notation and the definitions in \ref{und}. 
  \begin{enumerate}[(i)]    
\item 
The relation $ \sim$ on $S \times S$ is an equivalence relation. 
\item
The operation $K$ is well-defined on the $ \sim$-equivalence classes.
\item
The relation $ \sim$ is a semilattice congruence
on the semilattice product $\mathbf S \times \mathbf S$,
hence $\widetilde{\mathbf S}$
inherits  a semilattice structure from
$\mathbf S \times \mathbf S$.
\item
If $\mathbf S$ is a specialization semilattice with $0$, then
$K$ satisfies
\begin{equation*}   
K[a,b] = [0, a \vee b]= [a, a \vee b ].
  \end{equation*}
 \end{enumerate} 
 
 \end{lemma}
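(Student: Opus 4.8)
The plan is to verify all four parts by direct computation from the axioms, transporting $\sqsubseteq$ along chains through \eqref{s2} and merging witnesses through \eqref{s3}. It helps to keep in mind the intended reading $[a,b]\leftrightarrow a\vee Kb$: clause (*) then asserts exactly that $a\vee Kb$ and $c\vee Kd$ agree, the two $\sqsubseteq$-conditions on the second coordinates forcing $Kb=Kd$ in the spirit of Remark \ref{psc}(c).

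For (i), reflexivity and symmetry are immediate, since (*) is symmetric under exchanging $(a,b)$ with $(c,d)$ and $a_1$ with $c_1$, and reflexivity holds with $a_1=c_1=b$ (using \eqref{s4}). The substance is transitivity. Suppose $(a,b)\sim(c,d)$ with witnesses $a_1\sqsubseteq b$, $c_1\sqsubseteq d$, and $(c,d)\sim(e,f)$ with witnesses $c_2\sqsubseteq d$, $e_2\sqsubseteq f$. First, \eqref{s2} applied to $b\sqsubseteq d\sqsubseteq f$ and $f\sqsubseteq d\sqsubseteq b$ gives $b\sqsubseteq f$ and $f\sqsubseteq b$. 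For the order clauses I chain the inequalities: from $e\leq c\vee c_2$ and $c\leq a\vee a_1$ I obtain $e\leq a\vee(a_1\vee c_2)$, where $a_1\vee c_2\sqsubseteq b$ because $a_1\sqsubseteq b$ and $c_2\sqsubseteq d\sqsubseteq b$ combine via \eqref{s2} and \eqref{s3}; symmetrically $a\leq e\vee(c_1\vee e_2)$ with $c_1\vee e_2\sqsubseteq f$. These are the witnesses needed for $(a,b)\sim(e,f)$.

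Parts (ii) and (iii) then reduce to reusing witnesses. For (ii) I check that $(a,b)\sim(c,d)$ implies $(a,a\vee b)\sim(c,c\vee d)$, keeping $a_1,c_1$: they still satisfy $a_1\sqsubseteq a\vee b$ and $c_1\sqsubseteq c\vee d$ by \eqref{s2}, the order clauses are unchanged, and the new second coordinates are $\sqsubseteq$-equivalent since $a\sqsubseteq c\vee d$ (from $a\leq c\vee c_1$ together with $c,c_1\sqsubseteq c\vee d$) and $b\sqsubseteq d\leq c\vee d$ combine through \eqref{s3}. For (iii), the product operation being componentwise, it suffices to prove one-sided compatibility, namely $(a,b)\sim(c,d)\Rightarrow(a\vee e,b\vee f)\sim(c\vee e,d\vee f)$; again $a_1,c_1$ work, since $a\leq c\vee c_1$ yields $a\vee e\leq(c\vee e)\vee c_1$ and $a_1\sqsubseteq b\leq b\vee f$ yields $a_1\sqsubseteq b\vee f$. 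Full congruence follows by commutativity and the transitivity established in (i).

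Finally, (iv) uses the element $0$. Since $K[a,b]=[a,a\vee b]$ by definition, it remains to show $(a,a\vee b)\sim(0,a\vee b)$: the second coordinates coincide, taking $c_1=a$ (legitimate, as $a\leq a\vee b$ forces $a\sqsubseteq a\vee b$ by \eqref{s1}) gives $a\leq 0\vee c_1$, and $a_1=0$ gives $0\leq a\vee a_1$ trivially. I expect transitivity in (i) to be the only genuinely delicate step, since it is the single place where witnesses coming from two distinct instances of (*) must be combined into one, which is precisely the manipulation that \eqref{s3} is designed to permit.
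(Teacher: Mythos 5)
Your proof is correct and follows essentially the same route as the paper's: reflexivity via $a_1=c_1=b$, transitivity by joining the witnesses from the two instances of (*) (your $a_1\vee c_2$ and $c_1\vee e_2$ are exactly the paper's $a''_1$ and $e''_1$), and parts (ii)--(iv) by reusing the same witnesses, with only the inessential difference that in (iv) you take $c_1=a$ where the paper takes $c_1=a\vee b$.
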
 

\begin{proof}    
(i) The relation $ \sim$ is 
symmetric, since its definition
is symmetric. 
It is reflexive 
because of \eqref{s4}, since if
$a=c$ and  $b=d$, then we can take
$a_1=c_1=b$ in (*).   
We now check transitivity.
Let $ (a,b ) \sim (c,d)$ be witnessed by elements
 $a_1, c_1 $ as in (*).
Let
$ (c,d) \sim (e,f)$ be witnessed by 
 $c'_1, e'_1 $, thus
$d \sqsubseteq f$, $f \sqsubseteq d$, 
  $ c'_1 \sqsubseteq d$,
$ e'_1 \sqsubseteq f$
and
$ c  \leq e \vee e'_1$,
 $ e \leq  c \vee c'_1$.
Then $b \sqsubseteq f$,
 by   $ b\sqsubseteq d$, $d \sqsubseteq f$ and
\eqref{s2}. 
Symmetrically $ f \sqsubseteq b$.
From $ a  \leq c \vee c_1$
and  $ c \leq  e \vee e'_1$
we get $ a  \leq e \vee e'_1 \vee c_1$.
Moreover, 
$ c_1 \sqsubseteq d \sqsubseteq f $,
hence 
$ c_1  \sqsubseteq f $
by \eqref{s2},
thus
$e'_1 \vee c_1 \sqsubseteq f$,
by    $ e'_1 \sqsubseteq f$
 and \eqref{s3}.
Symmetrically,
$ e  \leq a \vee a_1 \vee c'_1$
and
$a_1 \vee c'_1 \sqsubseteq b$. 
This means that the elements
$ a''_1 =  a_1 \vee c'_1$
and
$ e''_1 =  e'_1 \vee c_1$
witness $ (a,b) \sim (e,f)$.

(ii) We have to show that 
if $ (a,b) \sim (c,d)$,
then $ (a, a \vee b) \sim (c, c \vee d)$.
Suppose that $ (a,b) \sim (c,d)$ is witnessed by
$a_1$ and  $c_1$, as given by (*) in Definition \ref{und}. 
 From $a \leq c \vee c_1$ and 
$ c_1 \sqsubseteq d$ 
 we get $ a \sqsubseteq  c \vee c_1 
\sqsubseteq c \vee d$, by \eqref{s1}, \eqref{s4} and \eqref{s7},
hence $ a \sqsubseteq  c \vee d$, by \eqref{s2}. 
Since $ b \sqsubseteq d \leq c \vee d$,
hence $ b \sqsubseteq c \vee d$, by \eqref{s1} and \eqref{s2}, then
we also have $ a \vee b \sqsubseteq c \vee d$,
by \eqref{s3}. 
Symmetrically,
$ c \vee d  \sqsubseteq a \vee b$.
The remaining conditions in Clause (*)
in Definition \ref{und} are  verified
by using the same $a_1 $ and $ c_1$.
Indeed, from $a_1 \sqsubseteq b$ and $b \leq a \vee b$,
hence $b \sqsubseteq  a \vee b$ by \eqref{s1}, we get
$a_1 \sqsubseteq   a \vee b$, by \eqref{s2}. Similarly,
$c_1 \sqsubseteq   c \vee d$.
The last two conditions in (*) hold
since these conditions do not involve $b$ and $d$,
and $a$ and  $c$ have remained unchanged. 

Hence 
$ (a, a \vee b) \sim (c, c \vee d)$.
This means that $K$ is well-defined.

(iii)
We have to show that if  $ (a,b) \sim (c,d)$,
then  $ (a,b) \vee (e,f) \sim (c,d) \vee (e,f) $,
that is, $ (a \vee e ,b \vee f) \sim (c \vee e,d \vee f)$.
Since $ (a,b) \sim (c,d)$,
then $ b \sqsubseteq d$, hence  $ b \vee f \sqsubseteq d \vee f$
follows from \eqref{s4} and  \eqref{s7}.
Symmetrically,
$ d \vee f  \sqsubseteq b \vee f$.
Again by $ (a,b) \sim (c,d)$,
there is $c_1 \sqsubseteq d$
such that $a \leq c \vee c_1$.   
Then 
$c_1 \sqsubseteq d \vee f$
by  \eqref{s2} (since $d \sqsubseteq d \vee f$ 
by \eqref{s1}); moreover,  
$a \vee e \leq c \vee e \vee c_1$.
Performing the symmetrical argument,
we get $a_1 \sqsubseteq b \vee f$
and $c \vee e \leq a \vee e \vee a_1$. This means 
that the same elements 
$c_1$ and $a_1$ witnessing   
$ (a,b) \sim (c,d)$ also
witness 
$ (a \vee e ,b \vee f) \sim (c \vee e,d \vee f)$.
We have shown that 
$ \sim$ is a semilattice congruence.

(iv) Since  we have shown that $K$ 
is well defined on the equivalence classes,
 in order to get the equation in (iv) it is enough to check that  
if $\mathbf S$ has a $0$, then  
$ (a, a \vee b) \sim (0, a \vee b)$.
The condition $ a \vee b \sqsubseteq a \vee b $
is from \eqref{s4}. Then take 
$c_1 =  a \vee b$ and $a_1=0$ 
in order to witness (*). 
 \end{proof}

 \begin{proof}[Proof of Theorem \ref{univt}]
Definition \ref{und} is justified
by Lemma \ref{corre}.
In order to prove Clause (1) in the theorem 
it is easier to deal with the structure $\widetilde{\mathbf S}'$
from Definition \ref{und}.

\begin{claim*} \labbel{cl} 
$\widetilde {   \mathbf S}' = (\widetilde { S},   \vee, K) $
is  an additive closure semilattice.
 \end{claim*}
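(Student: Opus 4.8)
The plan is to verify that $\widetilde{\mathbf{S}}' = (\widetilde{S}, \vee, K)$ satisfies the three defining axioms of a closure semilattice, namely $[a,b] \leq K[a,b]$, $KK[a,b] = K[a,b]$, and the isotonicity condition $K(x \vee y) \geq Kx \vee Ky$, plus the additivity equation $K(x \vee y) = Kx \vee Ky$. Throughout I would work with representatives and rely on Lemma \ref{corre}, which already guarantees that $K$, $\vee$, and the induced order $\leq$ are well-defined on $\sim$-classes. Since the paper assumes a $0$ here, I would freely use the convenient normal form $K[a,b] = [0, a \vee b] = [a, a \vee b]$ from Lemma \ref{corre}(iv).

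First I would unwind the order $\leq$ induced by $\vee$ on $\widetilde{S}$: since $\vee$ is inherited from the product $\mathbf{S} \times \mathbf{S}$, we have $[a,b] \leq [c,d]$ exactly when $[a,b] \vee [c,d] = [c,d]$, i.e. $(a \vee c, b \vee d) \sim (c,d)$. I expect it will be cleanest to first record a concrete criterion for $\leq$ in terms of the defining relation (*), and this is the step I anticipate being the main obstacle, since the equivalence $\sim$ is genuinely intricate and the order comparison must be translated into inequalities and $\sqsubseteq$-relations among representatives. Once that criterion is in hand, extensivity $[a,b] \leq [a, a \vee b]$ should follow directly: we compare $(a,b)$ with $(a, a\vee b)$ and check $(a \vee a, b \vee (a \vee b)) = (a, a \vee b) \sim (a, a \vee b)$ by reflexivity.

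Next I would handle idempotence. Using the normal form, $K[a,b] = [a, a \vee b]$, so $KK[a,b] = K[a, a \vee b] = [a,\, a \vee (a \vee b)] = [a, a \vee b] = K[a,b]$, where the middle equality uses associativity and idempotence of $\vee$ in $\mathbf{S}$. This step is essentially formal once well-definedness is available. For the additivity equation I would compute both sides on representatives: on one hand $K([a,b] \vee [e,f]) = K[a \vee e,\, b \vee f] = [a \vee e,\, a \vee e \vee b \vee f]$; on the other hand $K[a,b] \vee K[e,f] = [a, a \vee b] \vee [e, e \vee f] = [a \vee e,\, a \vee b \vee e \vee f]$. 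These two representatives are literally equal as pairs in $S \times S$ (again by commutativity, associativity and idempotence of $\vee$), so the two $\sim$-classes coincide; this simultaneously yields both $K(x \vee y) = Kx \vee Ky$ and, a fortiori, the weaker isotonicity axiom $K(x\vee y) \geq Kx \vee Ky$. Assembling extensivity, idempotence, and additivity establishes that $\widetilde{\mathbf{S}}'$ is an additive closure semilattice, proving the claim.
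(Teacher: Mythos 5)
Your proof is correct and follows essentially the same route as the paper: all three computations (extensivity, idempotence, additivity) reduce, exactly as you observe, to literal equalities of representative pairs in $S \times S$, so the anticipated difficulty of translating the induced order $\leq$ through the relation (*) never materializes. The paper likewise verifies the three closure axioms directly on representatives using $K[a,b]=[a,a\vee b]$ and the fact that $\sim$ is a semilattice congruence (Lemma \ref{corre}(iii)), with additivity subsuming the isotonicity axiom.
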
  

We have  shown in Lemma \ref{corre}(iii)  that 
$(\widetilde { S},   \vee)$ is a semilattice; it remains to check
that $K$ is an additive closure. 
Indeed, by the definition of $K$,
\begin{align*} \labbel{alk}    
& [a,b]  \leq [a,a \vee b] = K[a,b],
\\
& KK[a,b]= K[a,a \vee b]=[a,a \vee a \vee b]=K[a,b], \text{ and }
\\
& K([a,b] \vee [c,d]) = K[a \vee c, b \vee d] = [a \vee c, a \vee b \vee c \vee d]
\\
& \phantom{K([a,b] \vee [c,d])}=
[a, a \vee b] \vee [c,c \vee d]=  K[a,b] \vee K[c,d].
 \end{align*} 

Having proved the claim, Clause (1)
in the theorem follows 
from Remark \ref{psc}(a).
The $0$ of  $\widetilde {   \mathbf S}$
is the class $[0,0]$, since $(0,0)$ is a neutral element for
$\mathbf S \times \mathbf S$, hence 
$[0,0]$ is neutral for the quotient $\widetilde {   \mathbf S}=
   \mathbf S/ { \sim } $.
Moreover, $[a,b] \sqsubseteq [0,0]$
means $[a,b] \leq K[0,0]=[0,0]$
and this implies   $[a,b] =[0,0]$, since
$[0,0]$ is the minimum of $\widetilde {   \mathbf S}$.

Now we prove (2).
We have
$\upsilon_{_\mathbf S} (a \vee b) = [a \vee b, 0]
= [a , 0] \vee [b,0] = \upsilon_{_\mathbf S} (a) \vee \upsilon_{_\mathbf S} ( b)$,
hence $\upsilon_{_\mathbf S} $ is a semilattice homomorphism.
Moreover, $\upsilon_{_\mathbf S} $ is injective, since
$\upsilon_{_\mathbf S} (a) =\upsilon_{_\mathbf S} ( c 
)$ 
means 
$(a,0) \sim (c
,0)$
and this happens only if 
$a \leq c$ and $c \leq a$,
that is,  $a=c$.
Indeed, if    $b=d=0$ and  $ a_1 \sqsubseteq b$,
$ c_1 \sqsubseteq d$   in Clause (*) in  Definition \ref{und},
then $a_1=c_1=0$ by \eqref{s0}.    

Furthermore, 
if $ a \sqsubseteq b $ in $\mathbf S$,
then 
$ a \vee b \sqsubseteq  b $,
by \eqref{s4} and
  \eqref{s3}.
We then get
$(a \vee b, b) \sim (b,b)$,
by \eqref{s4} and  taking  
$c_1=a \vee b$ and
$a_1=b$ in (*) from  
Definition \ref{und}. 
Hence $[a,0] \leq [a \vee b, b] = [b,b] = K[b,0]$,  
that is,  $\upsilon_{_\mathbf S} (a) \sqsubseteq \upsilon_{_\mathbf S} (b)$,
according to the definition  of  $ \sqsubseteq $  on
 $\widetilde{S}$  in Definition \ref{und}.
This shows that $\upsilon_{_\mathbf S}$
is a $ \sqsubseteq $-homomorphism.

In fact, $\upsilon_{_\mathbf S}$ is an embedding, since from
$\upsilon_{_\mathbf S} (a) \sqsubseteq \upsilon_{_\mathbf S} (b)$,
that is, 
$[a,0] \leq  K[b,0] =  [b,b] $,
we get 
$[a \vee b,b]= [a,0] \vee [b, b] =  [b,b] $,
that is, $(a \vee b ,b) \sim (b,b) $,
hence, according to (*) in Definition \ref{und},
 $a \vee b \leq b \vee c_1$,  for some $c_1 \sqsubseteq b$.  
From $ a \leq a \vee b \leq b \vee c_1$
and \eqref{s1} we get $a \sqsubseteq b \vee c_1$.
From  $c_1 \sqsubseteq b$, \eqref{s4} and  \eqref{s3}
we get  $ b \vee c_1 \sqsubseteq b$,
hence $a \sqsubseteq b$ by \eqref{s2}.
This shows that $\upsilon_{_\mathbf S}$ is an embedding.
 
Since $\upsilon_{_\mathbf S}(0)=[0,0]$,
then $\upsilon_{_\mathbf S}$ is $0$-preserving.

We now deal with (3).
If $\eta: \mathbf S \to \mathbf T$
is a homomorphism and there exists 
$\widetilde{\eta}$ 
such that   
$\eta = \upsilon_{_\mathbf S} \circ \widetilde{\eta}$,
then
$\widetilde{\eta}([a,0]) =
   \widetilde{\eta}(\upsilon_{_\mathbf S}(a)) = \eta (a) $, for every $a \in S$.
If furthermore $ \widetilde{\eta}$
is a $K$-homomorphism, then 
$ \widetilde{\eta}([0,b]) = ^{\ref{corre}}  \widetilde{\eta}(K[b,0])=
K \widetilde{\eta}([b,0]) = K \eta (b)$, by
the equation  in Lemma \ref{corre}(iv).
Since $\widetilde{\eta}$ is supposed to be
a lattice homomorphism, 
it follows that 
$\widetilde{\eta}([a,b])
=\widetilde{\eta}([a,0]) \vee \widetilde{\eta}([0,b])=
 \eta (a) \vee  K\eta (b)$, 
hence if $   \widetilde{\eta}$  
exists it is unique.
The above considerations make sense since 
$\mathbf T$ is assumed to be principal, so that 
 $K \eta (b)$ exists. 

It is then enough to show that the above condition
$\widetilde{\eta}([a,b]) =  \eta (a) \vee  K\eta (b)$ 
actually determines a $K$-homomorphism
$   \widetilde{\eta}$ from 
$\widetilde{ \mathbf S}$  
to $\mathbf T$.

First, we need to check that 
if $ (a,b) \sim (c,d)$,
then  $ \eta (a) \vee  K\eta (b) =\eta(c) \vee K\eta(d) $,
so that $\widetilde{\eta}$
is well-defined.
In fact, 
suppose that $ (a,b) \sim (c,d)$ is given
by (*) in \ref{und}. 
From
$b \sqsubseteq d$
and  $ d \sqsubseteq b$,
we get
$ \eta (b)\sqsubseteq \eta(d)$
and  $ \eta(d) \sqsubseteq \eta(b)$,
since $\eta$ is a homomorphism, 
 so that $K\eta(b)= K\eta(d)$,
by the final sentence in Remark \ref{psc}(c).
Moreover, 
if 
 $ c_1 \sqsubseteq d$,
then 
 $ \eta(c_1) \sqsubseteq \eta(d)$,
so that 
 $ \eta(c_1) \leq K\eta(d)$,
by the definition of $K$. 
Since in addition
$ a  \leq c \vee c_1$,
then 
$ \eta(a)  \leq 
\eta(c \vee c_1) =
\eta(c) \vee \eta(c_1) 
\leq \eta(c) \vee K\eta(d) $,
since $\eta$ is a homomorphism, 
so that 
$ \eta(a)  \vee K\eta(b) 
\leq \eta(c) \vee K\eta(d) $,
since we have already shown that 
$K\eta(b)= K\eta(d)$.
Symmetrically,  
$ \eta(c) \vee K\eta(d)  
\leq \eta(a)  \vee K\eta(b)$,
hence $ \eta(c) \vee K\eta(d)  
= \eta(a)  \vee K\eta(b)$.
This means that $   \widetilde{\eta}$ is well-defined.

We now check that 
$   \widetilde{\eta}$ is a
semilattice homomorphism. 
Indeed, 
\begin{align*} 
 \widetilde{\eta}([a,b])  \vee \widetilde{\eta}([c,d]) &=
  \eta (a) \vee  K\eta (b) \vee   \eta (c) \vee  K\eta (d)
\\
 &=
  \eta (a) \vee    \eta (c) \vee  K\eta (b) \vee  K\eta (d)
\\
 &=^{ \text{A}}
\eta( a \vee c)  \vee  K(\eta (b) \vee  \eta (d))
\\
 &=
\eta( a \vee c)  \vee  K\eta (b \vee  d) =
\widetilde{\eta}([a \vee c ,b \vee d]) ,
 \end{align*}
where we have used the definition of
$\widetilde{\eta}$, the  assumption
that $\eta$ is a semilattice homomorphism and 
in the identity marked with the superscript $A$ 
we have used the assumption that $\mathbf T$  
is additive.

Finally, $   \widetilde{\eta}$ is a
$K$-homomorphism,
since
\begin{multline*} 
       \widetilde{\eta}(K[a,b])=
 \widetilde{\eta}([a, a \vee b])=
\eta(a) \vee K\eta(a \vee b) = ^{\diamondsuit} 
K\eta(a \vee b) =
\\
K(\eta(a) \vee \eta(b))= ^{\ref{psc}} 
K (\eta(a) \vee K\eta(b))=
K \widetilde{\eta}([a,b]),
\end{multline*}   
where we have used 
the definitions of $K$ and $\widetilde{\eta}$,
the assumption that $\eta$ is a homomorphism of specialization semilattices 
with $0$ and
Remark \ref{psc}(b). The equation marked with $\diamondsuit$
follows from $\eta(a) \leq  \eta(a \vee b) \leq  K\eta(a \vee b)$. 

Notice that we do not need to assume that 
$\mathbf T$ has a $0$, in order to get (3).
However, if $\mathbf T$ has a $0$ and
$\eta$ is $0$-preserving,  then $\widetilde{\eta}$
is  $0$-preserving, too, since we have proved that 
$\upsilon_{_\mathbf S}$ is $0$-preserving and
that  the diagram in (3) commutes.

Having proved Clause (3) in the theorem, 
we now prove Clause (4).
Suppose that  $\mathbf U$
is a specialization semilattice with $0$ 
and $\psi : \mathbf S \to \mathbf U$ 
is a $0$-preserving homomorphism. 
Then  $\eta = \psi \circ \upsilon_{_{\mathbf U} }$
is a $0$-preserving homomorphism from 
$\mathbf S$ to  $ \widetilde{\mathbf  U}$,
by clauses (1) and (2) applied to $\mathbf U$. 
Applying Clause 
 (3) 
to $\eta $
and $\mathbf  T=  \widetilde{\mathbf  U}$,
and by the last comment in the proof of (3),
we get that there is a unique $0$-preserving $K$-homomorphism
 $ \widetilde{ \eta } $ from 
$\widetilde{\mathbf S}$ to $\widetilde{\mathbf  U}$
such that  
 $ \eta = \upsilon_{_{\mathbf S} } \circ \widetilde{ \eta } $.
Letting $\widetilde{ \psi } = \widetilde{ \eta }$,
then the diagram in (4)  commutes, since 
we have taken $\eta = \psi \circ \upsilon_{_{\mathbf U} }$.
Conversely, if  $\widetilde{ \psi }$ 
is a $K$-homomorphism
which makes the diagram 
in (4) commute, then necessarily 
$\widetilde{ \psi } = \widetilde{ \eta }$, because of the unicity
of $\widetilde{ \eta }$ in (3).
 
 \end{proof}

Notice that 
$\upsilon_{_\mathbf S}$ as given by Theorem \ref{univt}(2) does not necessarily 
preserve existing closures in $\mathbf S$:
just consider the case in which $\mathbf S$
is principal but not additive,
then closures necessarily are modified, since
$\widetilde{\mathbf S}$
turns out to be additive.   

Moreover, it is necessary to ask that 
 $\widetilde{\eta}$ is 
a $K$-homomorphism in Theorem \ref{univt}(3);
 it is not enough to assume that
 $\widetilde{\eta}$ is 
just a homomorphism.
Indeed, let $\mathbf S = \mathbb N$ with 
$\max$ as join and with $ n \sqsubseteq m $,
for    all $m,n >0$. 
Then $\widetilde{ \mathbf S}$
is isomorphic  to 
$\mathbf S \cup \{  \infty\} $,
where $Ka= \infty$,
for every   $a \in S \cup \{  \infty\}$,
$a \neq 0$.
Let $T= \{ 0,1, 2 \} $
with $ 2 \sqsubseteq 1$
and with  the standard interpretation otherwise.
Let $\eta: \mathbf S \to \mathbf T$
with $\eta(0)=0$
and $\eta(n)=1$ otherwise.  
Then   the only $K$-homomorphism
extending $\eta$ 
must send $\infty$ to $2=K(1)$.
However, if we set
$\eta^*(\infty) =1 $,
we still get a (not $K$-) homomorphism
from $\widetilde{ \mathbf S}$ to $\mathbf T$ 
extending $\eta$.

\begin{remark} \labbel{w0}
For simplicity, we have stated
and proved  Theorem \ref{univt}
for specialization semilattices with $0$,
but the theorem holds  for arbitrary  specialization semilattices.
  
If $\mathbf S_1$ does not have a
$0$, first apply the theorem to
  $\mathbf S=\mathbf S_1 \cup \{ 0 \} $
as constructed in Remark \ref{0}
and then restrict to    $\mathbf S_1$
and $ \widetilde{\mathbf S} \setminus \{ 0 \} $.
Notice that  $\upsilon_{_\mathbf S}$
sends $0$ to  $0$. 

In order to prove (3),
if $\eta_1: \mathbf S_1 \to \mathbf T_1$,
add a new $0$ to $\mathbf T_1$, getting some 
specialization semilattice $\mathbf T$. 
Extend $\eta_1$ to some homomorphism
$\eta: \mathbf S \to \mathbf T$
 by setting 
$\eta(0)=0$.
Having obtained (3) in the extended situation,
it follows that (3) holds for the original
 $\eta_1 $, $  \mathbf S_1 $ and $  \mathbf T_1$.
\end{remark}

\section{More general universal extensions} \labbel{univ}

In the present section we assume that the reader
is familiar with some basic notions of model theory \cite{CK}.
The following lemma about the existence of 
universal objects is a  folklore argument.
A \emph{subreduct} is a substructure of some reduct. 

In the next lemma $\mathscr L \subseteq \mathscr L'$ 
are two languages, $\mathcal K'$ is a class of models
for $\mathscr L'$  and $\mathcal K$ is the class of all
subreducts in the language $\mathscr L$ of members
of $\mathcal K'$.
We adopt the convention that models in 
$\mathcal K'$ are denoted by 
$\mathbf A'$, $\mathbf  B'$, \dots \  and 
$\mathbf A$, $\mathbf  B$, \dots
\ are the corresponding $\mathscr L$-reducts.

\begin{lemma} \labbel{exp}
Under the above assumptions, if 
$\mathcal K'$ is closed under isomorphism,
substructures and products,
 then, for every $\mathbf A \in \mathcal K$,
there are $ \widetilde{\mathbf  A}' \in \mathcal K'$ 
and an $\mathscr L$-embedding 
$\upsilon_{_\mathbf A}  : \mathbf A \to \widetilde{\mathbf  A}$
 such that, for every 
$\mathbf  B' \in \mathcal K'$  
 and $\mathscr L$-homomorphism $ \eta : \mathbf A \to \mathbf  B$, there
is a unique  $\mathscr L'$-homomorphism  
 $ \widetilde{\eta} : \widetilde{\mathbf A}' \to \mathbf  B'$
such that  $\eta = \upsilon_{_\mathbf A} \circ \widetilde{\eta}$. 
\begin{equation*}
\xymatrix{
	{\mathbf A}  \ar[rd]_{\eta}
 \ar[r]^{\upsilon_{_\mathbf A}}
 &\widetilde{\mathbf  A} \ar[d]^{\widetilde{\eta}}
&\widetilde{\mathbf A}' \ar[d]^{\widetilde{\eta}}\\
	 &{\mathbf  B}&{\mathbf B}'
}
   \end{equation*}    
 
The structure $\widetilde{\mathbf A}'$
is unique up to isomorphism over $\upsilon_{_\mathbf A}(A)$. 
As a consequence, if $\mathbf E \in \mathcal K$
and $\psi : \mathbf A \to \mathbf E$ 
is an $\mathscr L$-homomorphism, then 
$\psi$ lifts to an $\mathscr L'$-homomorphism
$\widetilde{\psi}: \widetilde{\mathbf A}' \to \widetilde{\mathbf E}'$
making the following diagram commute.
\begin{equation*}
\xymatrix{
	{\mathbf A}  \ar[d]_{\psi}
 \ar[r]^{\upsilon_{_\mathbf A}}
 &\widetilde{\mathbf  A} \ar[d]^{\widetilde{\psi}}
&\widetilde{\mathbf A}' \ar[d]^{\widetilde{\psi}}\\
	{\mathbf E} \ar[r]^{\upsilon_{_\mathbf E}}&\widetilde{\mathbf  E}&\widetilde{\mathbf  E}'
}
   \end{equation*}    
\end{lemma}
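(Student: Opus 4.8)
The plan is to recognize Lemma~\ref{exp} as the standard existence theorem for a \emph{free} (universal) $\mathscr L'$-structure over a subreduct, i.e.\ as a left adjoint to the composite ``take-reduct-and-forget'' functor from $\mathcal K'$ (with $\mathscr L'$-homomorphisms) to $\mathcal K$ (with $\mathscr L$-homomorphisms), specializing to Theorem~\ref{univt} when $\mathscr L = \{\vee,\sqsubseteq\}$ and $\mathscr L' = \{\vee,K\}$. Since $\mathcal K'$ is closed under isomorphism, substructures and products, the classical ``solution-set'' construction applies: I would realize $\widetilde{\mathbf A}'$ as the $\mathscr L'$-substructure generated by the image of $\mathbf A$ inside a large product of structures from $\mathcal K'$. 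The only genuinely non-formal ingredient is a cardinality bound guaranteeing that the relevant collection of ``test'' structures is a set and not a proper class; note that we cannot pass to quotients (as $\mathcal K'$ need not be closed under them), so the product-then-substructure method is exactly the right tool.

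First I would fix that bound. For $\mathbf A \in \mathcal K$ set $\lambda = |A| + |\mathscr L'| + \aleph_0$; the number of $\mathscr L'$-terms in $|A|$ variables is at most $\lambda$, so any member of $\mathcal K'$ generated by a homomorphic image of $A$ has cardinality at most $\lambda$. Let $I$ be the set of all pairs $(\mathbf B_i', \eta_i)$ with $\mathbf B_i' \in \mathcal K'$ having underlying set contained in $\lambda$, with $\eta_i : \mathbf A \to \mathbf B_i$ an $\mathscr L$-homomorphism into the reduct, and with $\eta_i(A)$ generating $\mathbf B_i'$ as an $\mathscr L'$-structure; up to isomorphism this list captures every way $A$ can map into a member of $\mathcal K'$ and generate it. Then I form the product $\mathbf P' = \prod_{i \in I} \mathbf B_i' \in \mathcal K'$ together with the diagonal $\mathscr L$-homomorphism $\upsilon' : \mathbf A \to \mathbf P$, $a \mapsto (\eta_i(a))_{i \in I}$, into its reduct. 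To see that $\upsilon'$ is an \emph{embedding} and not merely a homomorphism, I use that $\mathbf A \in \mathcal K$: some $\mathbf C' \in \mathcal K'$ has $\mathbf A$ as an $\mathscr L$-substructure of its reduct, and the $\mathscr L'$-substructure of $\mathbf C'$ generated by $A$ is, after renaming its universe inside $\lambda$, one of the indices $i_0$ with $\eta_{i_0}$ the inclusion; hence the composite $\upsilon' \circ \pi_{i_0}$ is an embedding, so $\upsilon'$ is one too. Finally I let $\widetilde{\mathbf A}'$ be the $\mathscr L'$-substructure of $\mathbf P'$ generated by $\upsilon'(A)$, which stays in $\mathcal K'$ by closure under substructures, and let $\upsilon_{_\mathbf A}$ be the corestriction of $\upsilon'$.

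For the universal property, given $\mathbf B' \in \mathcal K'$ and an $\mathscr L$-homomorphism $\eta : \mathbf A \to \mathbf B$, I replace $\mathbf B'$ by the $\mathscr L'$-substructure $\mathbf B_0'$ generated by $\eta(A)$: this lies in $\mathcal K'$, has size at most $\lambda$, and is isomorphic to some $\mathbf B_i'$ with $i \in I$ via an isomorphism carrying $\eta$ to $\eta_i$. The restriction of the projection $\pi_i$ to $\widetilde{\mathbf A}'$, followed by that isomorphism and the inclusion $\mathbf B_0' \hookrightarrow \mathbf B'$, yields an $\mathscr L'$-homomorphism $\widetilde\eta : \widetilde{\mathbf A}' \to \mathbf B'$ with $\eta = \upsilon_{_\mathbf A} \circ \widetilde\eta$, because $\upsilon' \circ \pi_i = \eta_i$. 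Uniqueness of $\widetilde\eta$ is immediate from the fact that $\upsilon_{_\mathbf A}(A)$ generates $\widetilde{\mathbf A}'$, so any two $\mathscr L'$-homomorphisms agreeing on it coincide. Uniqueness of $\widetilde{\mathbf A}'$ up to isomorphism over $\upsilon_{_\mathbf A}(A)$ is the usual argument for universal arrows. For the concluding functoriality statement I apply the universal property to the $\mathscr L$-homomorphism $\psi \circ \upsilon_{_\mathbf E} : \mathbf A \to \widetilde{\mathbf E}$ with target $\mathbf B' = \widetilde{\mathbf E}' \in \mathcal K'$, obtaining the unique $\widetilde\psi : \widetilde{\mathbf A}' \to \widetilde{\mathbf E}'$ making the required square commute.

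The main obstacle is entirely the set-theoretic bookkeeping of the second and third paragraphs: pinning down $\lambda$, verifying that restricting attention to generated substructures of size at most $\lambda$ loses no generality, and matching an arbitrary test pair $(\mathbf B', \eta)$ to a representative in $I$ along an isomorphism that transports $\eta$ to $\eta_i$. Once the solution set $I$ is correctly assembled, everything else — that products and generated substructures keep us inside $\mathcal K'$, that $\upsilon'$ is an embedding, that the projections provide the factorization, and that generation forces uniqueness — is routine.
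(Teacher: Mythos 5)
Your proposal is correct and follows essentially the same route as the paper: index the (isomorphism classes of) pairs consisting of a member of $\mathcal K'$ generated by a homomorphic image of $A$, bound their cardinality to get a set of representatives, form the product, take the $\mathscr L'$-substructure generated by the diagonal image of $A$, and derive the universal property from the projections and uniqueness from generation. The only cosmetic difference is that you pin the universes inside a fixed cardinal $\lambda$ while the paper selects representatives of equivalence classes of pairs, which is the same set-theoretic bookkeeping.
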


 \begin{proof} 
The proof is a standard construction of free objects.
Since $\mathbf A \in \mathcal K$,
then $\mathbf A$ is a subreduct of some $\mathbf C' \in \mathcal K'$.
Since $\mathcal K'$ is closed under substructures,
 we can choose $\mathbf  C'$ in such a way that
$\mathbf  C'$ is generated by $A$ in the language $\mathscr L'$.
Consider the class of all $\mathbf  C' \in \mathcal K'$
such that there is a homomorphism $\xi$ from $\mathbf A$ to 
$\mathbf  C$ and $\mathbf  C'$ is generated by    
$\xi(A)$ in the language $\mathscr L'$;
by the preceding sentence this class is nonempty.  
If $\xi, \mathbf  C'$ and $\xi_1, \mathbf  C'_1$
are as above, let us call the pairs $(\xi, \mathbf  C')$ and $(\xi_1, \mathbf  C'_1)$
\emph{equivalent} if there is an isomorphism 
$ \psi :\mathbf  C'_1 \to  \mathbf  C'$ such that 
$ \xi  = \xi_1 \circ \psi $, namely,
$\mathbf  C' $ and $   \mathbf  C'_1$ are isomorphic over
the image of $A$. 
Let $( \mathbf C'_i, \xi_i) _{i \in I} $ 
be a family of representatives for each 
equivalence class. 
Since each $\mathbf  C'$
as above is generated by  $\xi(A)$ in the language $\mathscr L'$,
we have $|C'| \leq \sup \{ \omega, |A|, |\mathscr L'| \} $,
hence any representative can be taken over some fixed set
of cardinality $\sup \{ \omega, |A|, |\mathscr L'| \} $;
in conclusion, there is a set---not a proper class---of
such representatives. 

Let $\mathbf  D' = \prod _{i \in I} \mathbf C'_i $,
thus $\mathbf  D' \in \mathcal K'$, since $\mathcal K'$  
is closed under products.
Let 
$ \widetilde{\mathbf  A}'$
be the substructure of $\mathbf  D'$ 
$\mathscr L'$-generated by the sequences 
 $ (\xi _i (a)) _{i \in I}$, for $a $ varying  in $A$.
Since $\mathcal K'$ is closed 
under substructures, then  
$ \widetilde{\mathbf  A}' \in \mathcal K'$.
Moreover, the function which assigns to $a \in A$
 the sequence $ (\xi _i (a)) _{i \in I}$
is an $\mathscr L$-embedding $\upsilon_{_\mathbf A}$  from $\mathbf A$ to
 $ \widetilde{\mathbf  A}$;  $\upsilon_{_\mathbf A}$
 is an embedding because of the 
first sentence in the proof.
Notice that, for every $i \in I$, the projection $\pi_i$ from
$\mathbf  D'$ to $\mathbf C'_i $ 
induces a homomorphism
$\zeta_i : \widetilde{\mathbf  A}' \to \mathbf C'_i$
such that $\xi_i=\upsilon_{_\mathbf A} \circ \zeta _i$. 

If
$\mathbf  B' \in \mathcal K'$  
 and $ \eta : \mathbf A \to \mathbf  B$
is a  homomorphism, 
let  
$\mathbf  B'_1$ be the $\mathscr L'$-substructure
of $\mathbf  B'$ generated by $\eta (A)$,
let $\iota$ be the inclusion embedding from  
$\mathbf  B'_1$ to $\mathbf  B'$ and let $\eta_1$ be
the function  induced by 
$\eta$ from $\mathbf A$ to $\mathbf  B_1$,  that is,
$\eta= \eta _1 \circ \iota$.
By the choice of the $\mathbf  C'_i$s, $\mathbf  B'_1$ 
  is isomorphic  
to  $\mathbf C'_i$, for some $i \in I$,
through an isomorphism $\psi$
such that  $\eta_1  = \xi_i \circ \psi$, hence 
$\eta= \eta _1 \circ \iota = \xi_i \circ \psi \circ \iota  = 
\upsilon_{_\mathbf A} \circ \zeta _i \circ \psi \circ \iota$. 
It follows that  $\widetilde{\eta} = \zeta_i  \circ \psi \circ   \iota$    
is the desired homomorphism. 

Since,
by construction,  
$ \widetilde{\mathbf  A}'$
is 
$\mathscr L'$-generated by $\upsilon_{_\mathbf A}(A)$,
then every element  of $ \widetilde{\mathbf  A}'$ has the form
$t(\upsilon_{_\mathbf A}(a_1), \dots, \upsilon_{_\mathbf A}(a_n))$,
for some natural number $n$, some term
$t$ of $\mathscr L'$   and elements 
$a_1, \dots, a_n$ of $A$.   
The requests that $\widetilde{\eta}$ be
an $\mathscr L'$-homomorphism and that 
$\eta = \upsilon_{_\mathbf A} \circ \widetilde{\eta}$ imply
that 
\begin{align*}
 \widetilde{\eta}(t(\upsilon_{_\mathbf A}(a_1), \dots, \upsilon_{_\mathbf A}(a_n)))
&= t(\widetilde{\eta}(\upsilon_{_\mathbf A}(a_1)), \dots, 
\widetilde{\eta}(\upsilon_{_\mathbf A}(a_n)))
\\
&= t(\eta(a_1), \dots, \eta(a_n)),
\end{align*}   
 hence $\widetilde{\eta}$ is uniquely determined.

To prove the last statement, just 
take $\eta = \psi \circ \upsilon_{\mathbf E }$,
$\mathbf  B=  \widetilde{\mathbf  E}$
and argue as in the proof of clause (4)
in Theorem \ref{univt}. 
\end{proof}

In particular, Lemma \ref{exp}
applies when 
  $\mathcal K'$ is the class of the models
of some universal Horn 
first-order theory $T'$  
in the language $\mathscr L'$.

 Lemma \ref{exp},
together with the above comment,
can be applied in all the
situations described below.

  \begin{enumerate}[(C1)]
\item
$\mathscr L'$ is the language of Boolean algebras plus
a binary relation symbol  $ \sqsubseteq $ 
and a unary operation symbol $K$.
 $T'$ is the theory of \emph{closure algebras},
that is, $T'$ contains the axioms for Boolean algebras plus
 axioms saying that $K0=0$ and $K$ is 
extensive, idempotent and additive and
let us add to $T'$ an axiom defining  $ \sqsubseteq $,
namely,
$ a \sqsubseteq b \Leftrightarrow a \leq Kb   $. 

Finally, $\mathscr L= \{ \vee, \sqsubseteq  \} $.  

\item
 $\mathscr L'$ is the language of
closure semilattices  plus
 a binary relation symbol  $ \sqsubseteq $. 
$T'$ is the theory of closure semilattices
plus axioms defining  $ \sqsubseteq $, as above,
 $\mathscr L= \{ \vee, \sqsubseteq  \} $.  

\item
As in (C1), but $K$ is only assumed to be 
extensive, idempotent and isotone.

\item
As in (C2), plus the assumption that $K$ is additive.

\item
As in (C2), plus the assumption that 
$K$ satisfies $a \vee Kb= K(a \vee b)$.

\item
 $\mathscr L'$ is the language of
closure posets  plus
 a binary relation symbol  $ \sqsubseteq $. 
$T'$ is the theory of closure posets
plus axioms defining  $ \sqsubseteq $.
Let $\mathscr L= \{ \leq , \sqsubseteq  \} $.  

\item
We can allow 
$\mathscr L= \{ \leq , \sqsubseteq  \} $ 
also in all cases (C1)-(C5), adding 
the symbol $\leq$ to $\mathscr L'$, with its definition 
 $a \leq b \Leftrightarrow a\vee b = b $. 
    \end{enumerate}   

Recall that $\mathcal K$ is the class
of all $\mathscr L$-subreducts of models of $\mathcal K'$. 
In  cases (C1)-(C5) the class 
$\mathcal K$ turns out to be the class of all specialization semilattices,
since we have proved in \cite[Theorem 4.10]{mtt}
that every specialization semilattice can be embedded into
the specialization semilattice associated to some topological
space $X$. In particular, this provides an embedding
into the  specialization closure algebra 
$(P(X), \cap, \cup, \complement, \emptyset, X, K, \sqsubseteq )$;
for cases (C2) - (C4) it is then sufficient to consider an appropriate reduct.

For case (C5), it follows from the proof of  \cite[Theorem 4.8]{mtt} 
 that every specialization semilattice can be extended to some principal
specialization semilattice satisfying 
$a \vee Kb= K(a \vee b)$.
In fact, for case (C5) the construction in the proof 
\cite[Theorem 4.8]{mtt} provides an explicit description 
for the universal object whose existence follows
from Lemma \ref{exp}.
Notice also that Theorem \ref{univt} here
provides a description for the universal 
object corresponding to (C4).  

In cases (C6) and (C7)
the class $K$ is the class of specialization posets,
since we have shown in \cite[Proposition 4.15]{mtt} 
that every specialization poset can be embedded
into the order-reduct of some specialization semilattice.
Then use the arguments for (C1) - (C5).

It is an open problem to provide an explicit description of  
the structure $ \widetilde{\mathbf  A}' $
given by Lemma \ref{exp} in cases (C1), (C3) and
(C6) - (C7). [In the meantime case (C2) has
been solved, manuscript to be posted on ArXiv]

\arxiv{
\begin{remark} \labbel{rmkexp}
If $\mathscr L' \setminus \mathscr L$ 
in Lemma \ref{exp} has only relation symbols, then the structure  
$\widetilde{\mathbf  A}'$ can be easily described.
 It has the same domain of $\mathbf A$ and, for every 
relation symbol $R \in \mathscr L' \setminus \mathscr L$
and every $a_{1}, \dots, a_{n} \in A$,
$R(a_{1}, \dots, a_{n})$ holds in  $\widetilde{\mathbf  A}'$
if and only if, for every extension of $\mathbf A$  to some  
$\mathbf  B$ which is a reduct of some $\mathbf  B' \in \mathcal K'$,
$R(a_{1}, \dots, a_{n})$ holds in  $\mathbf  B'$.
 
Henceforth, Lemma \ref{exp} is mainly interesting when 
$\mathscr L' \setminus \mathscr L$ has function symbols.

In this case, it may happen that some model $\mathbf A$ 
has some partial $\mathscr L'$-structure.
For example, in the situation at hand in the present note,
some element $a $ in the specialization semilattice $\mathcal S$ 
might have a closure $K_{_\mathbf S}a$.
 This eventuality does not influence
Theorem \ref{univt}, as stated, and, if $a \neq 0$, then  in    
$\widetilde{\mathbf  S}'$ $\upsilon_{_\mathbf S}(a)$ 
always acquires a new closure, 
different from $\upsilon_{_\mathbf S}(K_{_\mathbf S}a)$.
 
However, we can apply Lemma \ref{exp} even in this situation, when
we want to preserve the existing partial structure
(or, at least, some partial structure) in $\mathbf A$.
For example, as far as specialization semilattices  are concerned,
we  can add a new binary relation $R_K$ both to $\mathscr L$ 
and to $\mathscr L'$, and ask that   
\begin{align*}
&\forall xyz (R_K(x,y)   \Rightarrow ( z \sqsubseteq x 
\Leftrightarrow z \leq y) ), \  \text{ respectively,} 
\\
&\forall xy (R_K(x,y)   \Rightarrow  Kx=y )
  \end{align*}    
hold in $\mathcal K$ and, respectively, $\mathcal K'$. 
Using Lemma \ref{exp} in this situation,
we get a universal structure 
with respect to homomorphisms which preserve 
those closures witnessed by $R_K$. 

In the general case, to every  
$n$-ary function symbol $f$ in $\mathscr L \setminus \mathscr L'$,
one must associate an $n{+}1$-ary relation symbol
$R_f$, asking that the models in $\mathcal K'$ satisfy
\begin{align*}
&\forall x_1 \dots x_n y (R_f(x_1,\dots, x_n, y)  
 \Rightarrow  f(x_1,\dots, x_n)=y), 
  \end{align*}    
 while the axioms for $R_f$ in $\mathcal K$ depend
on the kind of structure one wants to preserve. 

Of course, it must be checked that
every model in $\mathcal K$ is actually 
a subreduct of some model in $\mathcal K'$.  
 \end{remark}   
} 

\section*{Acknowledgement}
We thank an anonymous referee for many useful
comments which helped to improve the paper.

\arxiv{

\section*{Appendix} \labbel{app} 

In this appendix we present a generalization
of \cite[Lemma 4.7]{mtt} which we have used 
in a tentative version of the present note.
The lemma turned out to be unnecessary in the subsequent
versions, but might be useful in different situations.

\begin{lemma} \labbel{lemq}
Suppose that $\mathbf S$ is a specialization semilattice 
and $ \sim$ is an equivalence relation on $S$ 
such that 
  \begin{enumerate}  
  \item 
$ \sim$ is a congruence for the semilattice reduct of $\mathbf S$, and
\item
If $a,b,b_1,c \in S$
are such that   $ a \sqsubseteq b \sim b_1 \sqsubseteq c$,
then there are 
$a_1 \sim a$ and  $c_1 \sim c$  in $S$ such that  
$a_1 \sqsubseteq c_1$.
  \end{enumerate}

Then $S/{\sim}$
can be given the structure of a specialization semilattice 
by considering the standard semilattice quotient and setting
\begin{equation}\labbel{q}
\text{$[a] \sqsubseteq [b]$ if 
there are $a_1,b_1 \in S$ such that 
$ a \sim a_1$, $ b \sim b_1$ and $ a_1\sqsubseteq b_1$.}   
   \end{equation}   

 Moreover, the projection 
$\pi: S \to S/{\sim} $  is a homomorphism
of specialization semilattices.  
 \end{lemma}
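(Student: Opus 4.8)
The plan is to verify the axioms \eqref{s1}--\eqref{s3} for the quotient structure $S/{\sim}$ together with the statement that $\pi$ is a homomorphism; reflexivity \eqref{s4} then comes for free exactly as in the paper, so $\sqsubseteq$ will be a preorder coarser than the induced order. By hypothesis (1) the relation $\sim$ is a semilattice congruence, so $S/{\sim}$ carries the usual quotient semilattice with $[a]\vee[b]=[a\vee b]$, induced order $[a]\leq[b]$ iff $a\vee b\sim b$, and the projection $\pi$ is a semilattice homomorphism. The relation defined by \eqref{q} depends only on the $\sim$-classes of its arguments, so it is automatically well-defined on $S/{\sim}$. Moreover $\pi$ preserves $\sqsubseteq$ trivially: if $a\sqsubseteq b$ in $\mathbf S$, take $a_1=a$, $b_1=b$ in \eqref{q}; this also settles the final sentence of the lemma.

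Next I would dispose of \eqref{s1} and \eqref{s3}, which use only the congruence and the axioms of $\mathbf S$. For \eqref{s1}, suppose $[a]\leq[b]$, i.e. $a\vee b\sim b$; from $a\leq a\vee b$ and \eqref{s1} in $\mathbf S$ we get $a\sqsubseteq a\vee b$, and the pair $(a,\,a\vee b)$ witnesses $[a]\sqsubseteq[b]$ via \eqref{q}. For \eqref{s3}, given $[a]\sqsubseteq[b]$ and $[a_1]\sqsubseteq[b]$, choose witnesses $a'\sqsubseteq b'$ and $a_1'\sqsubseteq b''$ with $a'\sim a$, $a_1'\sim a_1$ and $b'\sim b''\sim b$. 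The idea is to pass to the common upper bound $b'\vee b''$: the congruence gives $b'\vee b''\sim b$, while $b'\leq b'\vee b''$ with \eqref{s1} and \eqref{s2} in $\mathbf S$ yields $a'\sqsubseteq b'\vee b''$, and symmetrically $a_1'\sqsubseteq b'\vee b''$. Then \eqref{s3} in $\mathbf S$ gives $a'\vee a_1'\sqsubseteq b'\vee b''$, and since $a'\vee a_1'\sim a\vee a_1$ this witnesses $[a\vee a_1]\sqsubseteq[b]$.

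The hard part will be \eqref{s2}, transitivity, which is precisely where hypothesis (2) is designed to be used. Given $[a]\sqsubseteq[b]$ and $[b]\sqsubseteq[c]$, pick witnesses $a'\sqsubseteq b'$ and $b''\sqsubseteq c'$ with $a'\sim a$, $b'\sim b\sim b''$ and $c'\sim c$. This produces exactly the pattern $a'\sqsubseteq b'\sim b''\sqsubseteq c'$ appearing in the second hypothesis, which therefore supplies $a_1\sim a'$ and $c_1\sim c'$ with $a_1\sqsubseteq c_1$; since $a_1\sim a$ and $c_1\sim c$, this witnesses $[a]\sqsubseteq[c]$. I expect this to be the only genuinely nontrivial step, and it is unavoidable: the join trick used for \eqref{s3} fails here because the two given relations chain \emph{through} a common middle class rather than sitting \emph{below} a common element, and a bare semilattice congruence carries no information about $\sqsubseteq$, so some compatibility condition of the form (2) is exactly what makes $\sqsubseteq$ well-behaved on classes.
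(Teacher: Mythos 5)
Your proposal is correct and follows essentially the same route as the paper: the quotient semilattice structure and the homomorphism property of $\pi$ are handled by standard arguments, \eqref{s1} is witnessed by the pair $(a, a\vee b)$, \eqref{s3} is obtained by joining the witnesses (the paper invokes the derived rule \eqref{s7} where you combine \eqref{s1}, \eqref{s2} and \eqref{s3} by hand, which amounts to the same thing), and \eqref{s2} is exactly the place where hypothesis (2) is applied to the chain $a'\sqsubseteq b'\sim b''\sqsubseteq c'$. No gaps.
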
 

\begin{proof} 
By classical arguments 
$\mathbf S/{\sim}$
is a semilattice
and the projection is a 
homomorphism
of specialization semilattices.
Hence it remains to prove \eqref{s1} - \eqref{s3}.

If $ [a] \leq [b]$
in  $\mathbf S/{\sim}$,
then, by the above paragraph, 
$ [a \vee  b] = 
[a] \vee [b]= [b] $,
that is,
$ a \vee  b \sim b $.
Taking $a_1 = a$ and 
$b_1 = a \vee  b $, we get 
$ [a] \sqsubseteq  [b]$
by \eqref{q}, since  
$  a \sqsubseteq a \vee  b $ in $\mathbf S$.
We have proved \eqref{s1}.  

If $[a] \sqsubseteq [b]$ 
and 
$[b] \sqsubseteq [c]$,
then by \eqref{q} 
there are $a^*,b^*, b_1^*, c^* \in S$ such that 
$ a \sim a^*$, $ b \sim b^*$,
$ b \sim b_1^*$, $ c \sim c^*$
 and $ a^*\sqsubseteq b^*$,
$ b_1^* \sqsubseteq c^*$.
By transitivity and symmetry of $ \sim$,
we get $ b^* \sim b_1^* $,
hence by item (2)
there are  
$a_1 \sim a^*$ and  $c_1 \sim c^*$  in $S$ such that  
$a_1 \sqsubseteq c_1$.
Again by transitivity and symmetry of $ \sim$,
$ a \sim a_1$ and  $ c \sim c_1$
hence $[a] \sqsubseteq [c]$ 
follows from \eqref{q}.
We have proved \eqref{s2}.  

Now suppose that 
$[a] \sqsubseteq [b]$ and
$[a^*] \sqsubseteq [b]$,
thus there are 
$a_1,b_1,a^*_1,b_2 \in S$ such that 
$ a \sim a_1$, $ b \sim b_1$
$ a^* \sim a_1^*$, $ b \sim b_2$,
$ a_1\sqsubseteq b_1$
and  $a_1^* \sqsubseteq b_2$.
By \eqref{s7}  
$a_1 \vee a_1^* \sqsubseteq b_1 \vee b_2$,
hence 
$[a] \vee [a^*] \sim 
[a_1] \vee [a_1^*] = [a_1 \vee a_1^*]
\sqsubseteq [b_1 \vee b_2] 
= [b_1] \vee [b_2] = [b] \vee [b] = [b]$,
since $ \sim$ is a semilattice congruence.
This completes the proof of \eqref{s3}.  

By the definition of $ \sqsubseteq $ on
$\mathbf S/{\sim}$ the projection is
a specialization homomorphism.
\end{proof}    
} 

\end{document}